\documentclass[10pt, oneside]{amsart}   	
\usepackage{geometry}                		
\geometry{letterpaper}                   		
\usepackage{graphicx}				
\usepackage{amssymb}
\usepackage{amsmath}
\usepackage{amsthm}
\usepackage{hyperref}
\usepackage{bbm}
\usepackage{color,soul}




\newcommand{\bbN}{\mathbb{N}}

\newcommand{\bbR}{\mathbb{R}}

\newcommand{\bbZ}{\mathbb{Z}}

\newcommand{\eps}{\varepsilon}

\DeclareMathOperator{\tr}{tr}
\DeclareMathOperator{\sgn}{sgn}

\newtheorem{theorem}{Theorem}[section]
\newtheorem{lemma}[theorem]{Lemma}
\newtheorem{corollary}[theorem]{Corollary}
\newtheorem*{theorem*}{Theorem}

\numberwithin{equation}{section}

\title{Low Regularity Conservation Laws for the Benjamin-Ono Equation}
\author{Blaine Talbut}
\date{}							

\begin{document}

\maketitle

\begin{abstract}
We obtain conservation laws at negative regularity for the Benjamin-Ono equation on the line and on the circle. These conserved quantities control the $H^s$ norm of the solution for $-\frac{1}{2} < s < 0$. The conservation laws are obtained from a study of the perturbation determinant associated to the Lax pair of the equation.
\end{abstract}

\section{Introduction}

We study real-valued solutions to the Benjamin-Ono equation
\begin{equation} \label{BO} \tag{BO}
\frac{d}{dt} q = - H q'' + 2 q q'
\end{equation}
on the line $\bbR$ and the circle $\bbR / \bbZ$, where the Hilbert transform $H$ is defined in either setting by 
\[
\widehat{Hf}(\xi) = - i \sgn(\xi) \hat{f}(\xi).
\]
This equation is a model for the propagation of long internal waves. For a recent review of the literature on \eqref{BO}, see \cite{saut-review}. The equation \eqref{BO} is known to be completely integrable and to enjoy an infinite hierarchy of conservation laws which control the $H^{k/2}$ norms of the solution for $k = 0, 1, \ldots$ (see \cite{fokas-ablowitz}, \cite{coifman-wickerhauser}). The well-posedness of the Cauchy problem for \eqref{BO} has been well-studied on both the line (\cite{koch-tzvetkov}, \cite{kenig-koenig}, \cite{tao}, \cite{burq-planchon}) and the circle (\cite{molinet-ribaud}, \cite{molinet-1/2}). On both spaces, the lowest regularity for which global well-posedness is known in $H^s$ is $s = 0$ (see \cite{ionescu-kenig}, \cite{molinet-gwp}, \cite{molinet-pilot}, \cite{ifrim-tataru}). The equation \eqref{BO} is also known to be well-posed in the category of $C^0_t H^3_x \cap C^1_t H^1_x$ classical solutions (see \cite{iorio}, \cite{Saut}); note that a classical $C^0_t H^3_x$ solution is automatically $C^1_t H^1_x$ because it solves \eqref{BO}. For these results at nonnegative regularity, global well-posedness can be deduced from local well-posedness and the aforementioned hierarchy of conservation laws.

The equation \eqref{BO} also enjoys a scaling symmetry, to wit
\[
q \mapsto \lambda q( \lambda^2 t, \lambda x) .
\]
This symmetry leaves $\| q \|_{\dot{H}^{-1/2}(\bbR)}$ invariant. This suggests that at least below the critical regularity $s = -1/2$ we should expect \eqref{BO} to be ill-posed. Well-posedness on the line in the regime $- \frac{1}{2} \leq s < 0$ appears to be an open question, and we hope to apply the conservation laws obtained in this paper to a future study of this problem. On the other hand, on the circle, for all $s < 0$ the Cauchy problem is known to be ill-posed in the sense that the data-to-solution map fails to be pointwise continuous; see \cite{molinet-ill-posedness}. Our results show that, nevertheless, norm blowup does not occur on the circle for regularities $s > - \frac{1}{2}$. This leaves open the possibility that after some suitable renormalization of the solutions, one can recover well-posedness on the circle, as was done for the cubic Wick-ordered NLS on the circle in \cite{oh-sulem}.

The equation \eqref{BO} is related to the Korteweg--de Vries equation
\begin{equation} \label{KdV} \tag{KdV}
\frac{d}{dt} q = -q''' + 6qq'
\end{equation}
by way of the Intermediate Long Wave equation; \eqref{BO} is formally obtained from the Intermediate Long Wave equation in the deep water limit, while \eqref{KdV} arises from the shallow water limit. For details, see \cite{ablowitz-clarkson}. Using the integrable structure of \eqref{KdV} and in particular the Lax pair, Killip, Vi\c{s}an, and Zhang \cite{killip-visan-zhang} obtained conservation laws which govern the $H^s$ norm of the solution for $s \geq -1$. These same conservation laws were employed in \cite{kdv-gwp} to obtain global well-posedness of \eqref{KdV} in the space $H^{-1}$.

In this note, we follow the method of \cite{killip-visan-zhang} to obtain low-regularity conservation laws for \eqref{BO}. Our principal result is the following:
\begin{theorem*}
Let $q$ be a classical solution to \eqref{BO} on the line or the circle and let $- \frac{1}{2} < s < 0$, $1 \leq r \leq \infty$. Then
\[
(1 + \| q(0)\|^{\frac{2}{1+2s}}_{B^{s,2}_r})^{s} \sup_{t \in \bbR} \| q(t) \|_{B^{s,2}_r} \lesssim_{s,r} \| q(0) \|_{B^{s,2}_r}
	\lesssim_{s,r} (1 + \|q(0)\|^{\frac{2}{1+2s}}_{B^{s,2}_r})^{-s} \inf_{t \in \bbR} \| q(t)\|_{B^{s,2}_r} .
\]
The particular case of $r=2$ is equivalent to the conservation of the Sobolev norm:
\[
\sup_{t \in \bbR} (1 + \| q(0)\|^{\frac{2}{1+2s}}_{H^s})^{s -} \| q(t) \|_{H^s} \lesssim_{s,r} \| q(0) \|_{H^s}
	\lesssim_{s,r} \inf_{t \in \bbR} (1 + \|q(0)\|^{\frac{2}{1+2s}}_{H^s})^{-s+} \| q(t)\|_{H^s} .
\]
\end{theorem*}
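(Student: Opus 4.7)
The plan is to follow the perturbation determinant method of Killip--Vi\c{s}an--Zhang \cite{killip-visan-zhang}, adapted to the Lax operator of \eqref{BO}. On the line (and analogously on the circle) the BO Lax operator is the first-order operator $L_q = -i\partial + q$ acting on the Hardy space of positive-frequency functions, with free resolvent $R_0(\kappa) = (L_0 + \kappa)^{-1}$. The conserved quantity would be the regularized perturbation determinant
\[
\alpha(\kappa, q) := \log \det_2\bigl(1 + \sqrt{R_0(\kappa)}\, q\, \sqrt{R_0(\kappa)}\bigr), \qquad \kappa > 0,
\]
and the strategy is to show (i) $\alpha(\kappa, q)$ is well-defined and preserved by the BO flow, (ii) pointwise in $\kappa$ it is two-sidedly equivalent to a frequency-weighted $L^2$ quantity built from $q$, and (iii) a suitable integration over $\kappa$ recovers the $B^{s,2}_r$ norm.

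The basic Hilbert--Schmidt estimate is
\[
\bigl\|\sqrt{R_0(\kappa)}\, q\, \sqrt{R_0(\kappa)}\bigr\|_{\mathrm{HS}}^2 \lesssim \int \frac{|\hat q(\xi)|^2}{(|\xi|+\kappa)^2}\, d\xi \lesssim \kappa^{-(1+2s)}\|q\|_{H^s}^2
\]
for $-\tfrac{1}{2} < s < 0$. This forces the smallness condition $\kappa \gtrsim \|q\|_{H^s}^{2/(1+2s)}$ for absolute convergence of the Taylor series of the determinant; the power $2/(1+2s)$ is exactly the exponent that produces the factors $(1 + \|q(0)\|^{2/(1+2s)})^{\pm s}$ in the statement. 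Conservation $\partial_t \alpha(\kappa, q(t)) = 0$ would then follow from the Lax equation $\partial_t L_q = [P_q, L_q]$ by a standard trace-class computation, rigorously justified because $q$ is a classical solution. The quantitative equivalence would come from expanding
\[
\alpha(\kappa, q) = -\tfrac{1}{2}\tr\bigl[(\sqrt{R_0(\kappa)}\, q\, \sqrt{R_0(\kappa)})^2\bigr] + \sum_{n \geq 3}\tfrac{(-1)^{n+1}}{n}\tr\bigl[(\sqrt{R_0(\kappa)}\, q\, \sqrt{R_0(\kappa)})^n\bigr],
\]
identifying the quadratic term with a frequency-weighted $L^2$ expression for $q$, and bounding the cubic-and-higher remainder by $\|\cdot\|_{\mathrm{HS}}^3$, which is subleading once the threshold on $\kappa$ is in force.

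To extract the Besov norm, I would evaluate $\alpha$ at $\kappa \sim N$ dyadic to localize the quadratic expression on the shell of frequency $\sim N$, producing a comparison $\|q_N\|_{L^2}^2 \sim N \cdot [\text{localized piece of } {-2\alpha(N,q)}]$. Taking the $\ell^r(N)$ norm weighted by $N^{sr}$ gives $\|q\|_{B^{s,2}_r}$ on one side and a time-independent expression on the other, which yields the stated two-sided inequality; the Sobolev case $r = 2$ collapses to the Plancherel identity. The main obstacle I expect is controlling the higher-order trace terms uniformly as $\kappa$ approaches the threshold $\|q\|_{H^s}^{2/(1+2s)}$, since this regime is only marginally subcritical relative to the scaling exponent $s_c = -\tfrac{1}{2}$, and the requisite multilinear estimates on the resolvent must be handled with care. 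A secondary difficulty is the circle setting, where the discreteness of the spectrum of $L_0$ requires an appropriate reinterpretation of the continuous construction and a careful treatment of the zero mode.
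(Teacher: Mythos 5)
Your overall architecture is the same as the paper's: the renormalized perturbation determinant of the BO Lax operator on the Hardy space, conservation via the Lax equation, the smallness threshold $\kappa \gtrsim \|q\|_{H^s}^{2/(1+2s)}$, and a dyadic-in-$\kappa$ synthesis of the Besov norm. However, there is a genuine gap centered on your Hilbert--Schmidt estimate and the ``localization'' step that follows from it. Because the BO Lax operator is \emph{first order}, the correct computation gives
\[
\bigl\|\sqrt{R_\kappa}\, C_+ q\, C_+\sqrt{R_\kappa}\bigr\|_{\mathfrak{I}_2}^2 \;=\; \int \frac{\log(1+|\xi|/\kappa)}{|\xi|}\,|\hat q(\xi)|^2\,d\xi \;\sim\; \int \frac{\log(2+|\xi|/\kappa)}{\sqrt{\kappa^2+\xi^2}}\,|\hat q(\xi)|^2\,d\xi ,
\]
not $\int |\hat q(\xi)|^2 (|\xi|+\kappa)^{-2}\,d\xi$: the weight decays only like $|\xi|^{-1}\log|\xi|$ at high frequency, rather than $|\xi|^{-2}$ as it would for a second-order Lax operator (KdV/NLS). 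Your final bound $\lesssim \kappa^{-1-2s}\|q\|_{H^s}^2$ happens to survive, so the threshold on $\kappa$ is right, but the pointwise-in-frequency structure is what matters for your step (iii), and there it breaks your plan. The quadratic form at scale $\kappa\sim N$ is \emph{not} localized to the shell $|\xi|\sim N$, nor is it commensurate with any $H^s$ norm: it behaves like $\|q\|_{H^{-1/2}}^2$ for $|\xi|\lesssim\kappa$ and like $\|\log(|\nabla|)\langle\nabla\rangle^{-1/2}q\|_{L^2}^2$ for $|\xi|\gg\kappa$. In particular the $r=2$ case does not ``collapse to the Plancherel identity''; the logarithm is exactly why the Sobolev statement in the theorem carries the $s-$ and $-s+$ losses.

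The missing idea is the two-sided comparison between $\sum_N N^{rs}\bigl(\kappa_0 N\,\langle q, T_{\kappa_0 N} q\rangle\bigr)^{r/2}$ and $\|q\|_{B^{s,2}_r}^r$. The easy direction uses that $\kappa_0 N\,\langle q, T_{\kappa_0 N} q\rangle$ dominates $\|\hat q\|_{L^2(|\xi|\le N)}^2$ (all low frequencies, not just the shell at $N$). The hard direction must control the contamination of the scale-$N$ quantity by every other dyadic shell $M$, whose contribution decays only like $(\log M)/M$; summing this against $M^{-2s}$ and then over $N$ amounts to bounding the $\ell^r\to\ell^r$ norm of an explicit dyadic interaction matrix by Schur's test, and it converges precisely because $-\tfrac12<s<0$. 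This cross-scale lemma is the real content of the step you describe as localization, and it is the main point at which BO diverges from the KdV template you are importing; without it the argument does not close. Two smaller omissions: conservation of $\alpha$ alone does not give uniform-in-time smallness of $\|A(\kappa;q(t))\|_{\mathfrak{I}_2}$ --- you need the continuity/bootstrap argument showing the open condition $\|A\|_{\mathfrak{I}_2}<\tfrac13$ propagates --- and on the circle the zero mode is handled concretely by restricting to mean-zero data and then removing that restriction with the Galilei boost $q(t,x)\mapsto q(t,x+2\mu t)+\mu$, which you flag but do not resolve.
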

This will be proved as Theorem \ref{main-theorem}. See section 3 for the definition of the Besov norms $\| f \|_{B^{s,2}_r}$.

Let us review the method of \cite{killip-visan-zhang} as it applies to our problem. The first thing to note is that \eqref{BO} has a Lax pair. We proceed formally, leaving aside considerations of boundedness until we have identified the objects of our study. We follow \cite{wu} in presenting the Lax pair as it decomposes along the Hardy spaces $H^{\pm}$ of $L^2$ functions whose Fourier transforms are supported on positive and negative modes, respectively. On the line, 
\[
L^2(\bbR) = H^+(\bbR) \oplus H^-(\bbR) .
\] 
On the circle we must be more careful, because the zero frequency mode contributes positive mass. However, if we restrict to the space $L^2_0(\bbR / \bbZ)$ of mean-zero $L^2$ functions, then 
\[
L^2_0(\bbR / \bbZ) = H^+(\bbR/\bbZ) \oplus H^-(\bbR/\bbZ).
\] 
Concordantly, for much of this paper we will assume that all our solutions to \eqref{BO} on the circle have mean 0. Because the \eqref{BO} flow preserves the mean of the data (since its right hand side is a complete derivative), this amounts to requiring the initial data to have mean 0. This assumption will be removed in the end by way of the Galilei transformation \eqref{galilei}.

The orthogonal Cauchy projections $C_{\pm} : L^2(\bbR) \to H^{\pm}(\bbR)$ and $C_{\pm} : L^2_0(\bbR/\bbZ) \to H^\pm(\bbR/\bbZ)$ are given by
\[
C_{\pm} f = \tfrac{1}{2} ( f \pm i H f) .
\]
Given a smooth, decaying function $q(t,x)$, we define operators $L_{\pm}, P_{\pm}$ by
\[
L_{\pm}(t) = \pm C_\pm \tfrac{1}{i} \partial_x  - C_\pm q(t) C_\pm ,
\]
\[
P_{\pm}(t) = \pm \tfrac{1}{i} C_\pm \partial_x^2 + 2( (C_\pm q_x(t))C_\pm - C_\pm q_x(t) - C_\pm q(t) C_\pm \partial_x )
\]

Because these operators leave $H^{\pm}$ (respectively) invariant, we are free to understand them to act on $L^2$ or on $H^\pm$. It was shown in \cite{wu} that $q(t)$ (mean 0 if on the circle) solves \eqref{BO} if and only if
\begin{equation} \label{lax-pair}
\frac{d}{dt} L_{\pm} = [L_{\pm}, P_{\pm}] .
\end{equation}
Let us restrict our attention to the action on $H^+$. Because of \eqref{lax-pair}, the \eqref{BO} flow preserves all the spectral properties of $L_+(t)$. Thus, formally, we expect the perturbation determinant (where the determinant is taken over $H^+$)
\[
\det ( (\kappa + L_+(t))R_\kappa) = \det({\rm id} - C_+ q(t) C_+ R_\kappa)
\]
to be preserved in time if $q$ solves \eqref{BO}. Here
\[
R_\kappa = C_+ (\kappa - i \partial_x)^{-1} C_+
\]
is defined by multiplication on the Fourier side by $\mathbbm{1}_{(0,\infty)}(\xi) (\kappa + \xi)^{-1}$. If $\kappa > 0$, this is a positive definite operator on $H^+$, and hence $\sqrt{R_\kappa}$ makes sense and the symbol of $\sqrt{R_\kappa}$ is the square root of that of $R_\kappa$. Its inverse $R_\kappa^{-1}$ also makes sense, albeit as an unbounded operator.

Taking a logarithm, we find
\begin{equation} \label{log-det}
- \log \det ( (\kappa + L_+(t))R_\kappa) = \sum_{\ell = 1}^\infty \frac{1}{\ell} \tr \big \{ \big ( C_+ q(t) C_+ R_\kappa \big)^\ell \big\} .
\end{equation}
It will be convenient to reformulate the above in terms of the operator
\[
A(\kappa; q) := \sqrt{R_\kappa} C_+ q C_+ \sqrt{R_\kappa}
\]
which depends linearly on $q$ and is self-adjoint when $q$ is real. Cycling the trace, we may rewrite \eqref{log-det} as
\[
\sum_{\ell = 1}^\infty \frac{1}{\ell} \tr \{ A(\kappa; q(t))^\ell \} .
\]
This quantity almost makes sense; however, $A(\kappa;q)$ is not a trace-class operator, even if $q$ is Schwartz. On the other hand, considered formally,
\[
\tr \{ A(\kappa; q(t)) \} = \tr \{ (\kappa + L_+(t)) R_\kappa  - {\rm id} \}
\]
ought to be preserved by the \eqref{BO} flow because of \eqref{lax-pair}. Thus we may have some confidence in dropping the $\ell = 1$ term to study the quantity 
\[
\alpha(\kappa ; q) := \sum_{\ell = 2}^\infty \frac{1}{\ell} \tr \{ A(\kappa; q)^\ell \} .
\]
As we shall see, this series makes sense if $q \in H^s$ for any $s > -\frac{1}{2}$ and $\kappa$ is sufficiently large. 


The crux of the method is to show, as the foregoing discussion suggests, that $\alpha(\kappa ; q)$ is conserved by the \eqref{BO} flow (section 2) and that it controls the relevant norm(s) of the solution (section 3). In our case and unlike in \cite{killip-visan-zhang}, the main term of $\alpha(\kappa; q)$ is not commensurate with any Sobolev norm of $q$. Since the $H^s$ norms for $s>- \frac{1}{2}$ are strictly stronger than the norm controlled by $\alpha(\kappa;q)$, the main theorem is recovered in section 3 from a kind of persistence of regularity.

\subsection{Notation and Preliminaries}

We write $A \lesssim B$ to mean that $A \leq C B$ for an absolute constant $C$; if the value of $C$ depends on parameters $a,b, \ldots$ then we will instead write $A \lesssim_{a,b,\ldots} B$. We write $A \lesssim B^{\gamma \pm}$ to mean that, for any $\eps > 0$, $A \lesssim_\eps B^{\gamma \pm \eps}$.

In this paper our conventions for the Fourier transform are
\[
\hat{f}(\xi) = \frac{1}{\sqrt{2 \pi}} \int_\bbR e^{- i x \xi} f(x) dx, \qquad \check{f}(x) = \frac{1}{\sqrt{2 \pi}} \int_\bbR e^{i x \xi} f(\xi) d \xi
\]
for functions on the line and
\[
\hat{f}(\xi) = \int_0^1 e^{- i x \xi} f(x) dx, \qquad \check{f}(x) = \sum_{\xi \in 2 \pi \bbZ} e^{i x \xi} f(\xi)
\]
for functions on the circle. We define
\[
\| f \|_{H^s(\bbR)}^2 = \int_\bbR (1 + |\xi|^2)^s |\hat{f}(\xi)|^2 d \xi , \qquad \| f \|_{H^s(\bbR / \bbZ)}^2 = \sum_{k \in 2 \pi \bbZ} (1 + |\xi|^2) |\hat{f}(\xi)|^2
\]
and let $H^s_0(\bbR / \bbZ)$ denote the subspace of $H^s(\bbR / \bbZ)$ functions with $\hat{f}(0) = 0$, i.e. mean zero.

Because our problem is translation-invariant, we may avoid any functional-analytic subtleties by working entirely on the Fourier side. If $T$ is a linear operator given on the Fourier side by
\[
\widehat{T \varphi}(\xi) = \int_\bbR K(\xi,\eta) \hat{\varphi}(\eta) d \eta
\]
then we may define the Hilbert-Schmidt norm of $T$ by
\[
\| T \|_{\frak{I}_2}^2 = \iint_{\bbR^2} |K(\xi, \eta)|^2  d \eta d \xi .
\]
Similarly, if $n \geq 2$ and $T_1, \ldots, T_n$ are Hilbert-Schmidt operators with Fourier kernels $K_1, \ldots, K_n$, then we say $T_1 \cdots T_n$ is trace class and define the trace
\[
\tr \{ T_1 \cdots T_n \} = \int_{\bbR^n} K_1(\xi_1, \xi_2) \cdots K_n(\xi_n, \xi_1) d \xi_1 \cdots d \xi_n . 
\]
In this formulation, cycling the trace amounts to an application of Fubini's theorem.

By the Cauchy-Schwarz inequality, $\alpha(\kappa;q)$ is a sub-geometric series with a common ratio $\lesssim \| A(q) \|_{\frak{I}_2}$. The following lemma gives sufficient conditions for this series to converge and submit to term-by-term differentiation and ensures that $\alpha(\kappa;q)$ is comparable to its first term.

\begin{lemma} \label{geometric-convergence}
Let $t \mapsto A(t)$ define a $C^1$ curve in $\frak{I}_2$. Suppose for some $t_0$ we have
\[
\|A(t_0)\|_{\frak{I}_2} < \frac{1}{3} .
\]
Then there is a closed interval $I$ containing $t_0$ on which the series
\[
\alpha(t) := \sum_{\ell = 2}^\infty \frac{1}{\ell} \tr \{ A(t)^\ell \}
\]
converges uniformly and defines a $C^1$ function which can be differentiated term by term:
\[
\frac{d}{dt} \alpha(t) = \sum_{\ell = 2}^\infty \tr \{ A(t)^{\ell - 1} \frac{d}{dt} A(t) \} .
\]
If $A(t)$ is self-adjoint, then
\[
\frac{1}{3} \| A(t)\|_{\frak{I}_2}^2 \leq \alpha(t) \leq \frac{2}{3} \| A(t) \|_{\frak{I}_2}^2 .
\]
\end{lemma}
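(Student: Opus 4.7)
The strategy is to combine the kernel-level Cauchy--Schwarz estimate
\[
|\tr\{A^\ell\}| \leq \|A^{\lceil \ell/2\rceil}\|_{\mathfrak{I}_2}\,\|A^{\lfloor \ell/2\rfloor}\|_{\mathfrak{I}_2}\leq \|A\|_{\mathfrak{I}_2}^{\ell}\qquad(\ell\geq 2),
\]
(the second bound follows by iterating $\|A^{k}\|_{\mathfrak{I}_2}\leq \|A\|_{\mathrm{op}}\|A^{k-1}\|_{\mathfrak{I}_2}\leq \|A\|_{\mathfrak{I}_2}^{k}$) with the Weierstrass $M$--test and the classical theorem on termwise differentiation of uniformly convergent series. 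Since $t\mapsto \|A(t)\|_{\mathfrak{I}_2}$ is continuous along the $C^1$ curve, I would first choose a closed interval $I\ni t_0$ and a constant $\theta<\tfrac{1}{3}$ such that $\|A(t)\|_{\mathfrak{I}_2}\leq\theta$ for all $t\in I$. The bound above then majorizes $\frac{1}{\ell}|\tr\{A(t)^\ell\}|$ by $\tfrac{1}{\ell}\theta^\ell$, so the series defining $\alpha(t)$ converges uniformly and absolutely on $I$.

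For $C^1$ regularity, the Leibniz rule for trace-class operators, together with Hölder's inequality $\|BC\|_{\mathfrak{I}_1}\leq\|B\|_{\mathfrak{I}_2}\|C\|_{\mathfrak{I}_2}$ (which also justifies cycling the trace), gives
\[
\frac{d}{dt}\tr\{A(t)^\ell\} = \ell\,\tr\{A(t)^{\ell-1}\dot A(t)\}.
\]
Estimating $|\tr\{A(t)^{\ell-1}\dot A(t)\}|\leq \|A(t)^{\ell-1}\|_{\mathfrak{I}_2}\|\dot A(t)\|_{\mathfrak{I}_2}\leq \theta^{\ell-1}\sup_{t\in I}\|\dot A(t)\|_{\mathfrak{I}_2}$, the termwise-differentiated series $\sum_{\ell\geq 2}\tr\{A(t)^{\ell-1}\dot A(t)\}$ is dominated by a geometric series with ratio $\theta<1$ and so converges uniformly on $I$. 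The standard theorem then yields $\alpha\in C^1(I)$ with the claimed derivative.

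Finally, for the self-adjoint case I would isolate the $\ell=2$ term, which equals $\tfrac{1}{2}\|A(t)\|_{\mathfrak{I}_2}^2$, and bound the tail by
\[
\Big|\sum_{\ell=3}^\infty \tfrac{1}{\ell}\tr\{A(t)^\ell\}\Big| \leq \tfrac{1}{3}\,\frac{\|A(t)\|_{\mathfrak{I}_2}^{3}}{1-\|A(t)\|_{\mathfrak{I}_2}} \leq \tfrac{1}{6}\|A(t)\|_{\mathfrak{I}_2}^{2},
\]
where the last inequality uses $\|A(t)\|_{\mathfrak{I}_2}<\tfrac{1}{3}$. This gives $|\alpha(t)-\tfrac{1}{2}\|A(t)\|_{\mathfrak{I}_2}^2|\leq \tfrac{1}{6}\|A(t)\|_{\mathfrak{I}_2}^2$, yielding the two-sided sandwich. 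No step poses a real obstacle; the whole lemma is essentially a careful verification that formal operations on the operator power series are legitimate in the Hilbert--Schmidt setting. The apparently specific constant $\tfrac{1}{3}$ is chosen not for convergence (which only requires $\|A\|_{\mathfrak{I}_2}<1$) but precisely to force the tail to be half the $\ell=2$ contribution, producing the clean sandwich bound.
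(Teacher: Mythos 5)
Your proof is correct. Note that the paper does not actually prove this lemma itself but defers to Lemma 1.5 of \cite{killip-visan-zhang}; your argument --- Cauchy--Schwarz at the level of Hilbert--Schmidt norms to majorize $\frac{1}{\ell}|\tr\{A^\ell\}|$ by $\frac{1}{\ell}\theta^\ell$, uniform convergence of the differentiated series to justify termwise differentiation, and isolation of the $\ell=2$ term $\frac{1}{2}\|A\|_{\mathfrak{I}_2}^2$ with the tail bounded by $\frac{1}{6}\|A\|_{\mathfrak{I}_2}^2$ --- is essentially the same standard argument given there, including the observation that the constant $\frac{1}{3}$ is what makes the sandwich bounds come out cleanly.
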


For a proof of this lemma, see \cite{killip-visan-zhang}, Lemma 1.5.

\subsection{Acknowledgements} I am grateful for the guidance of my advisors, Rowan Killip and Monica Vi\c{s}an. In the preparation of this paper I received support from NSF grants DMS-1500707, DMS-1600942, and DMS-1763074.

\section{Conservation of the Perturbation Determinant}

In light of Lemma~\ref{geometric-convergence}, our first task is to understand $\|A(q(t))\|_{\frak{I}_2}$. Our next result is most conveniently formulated in terms of the linear operator $T_\kappa$ given by the Fourier multiplier 
\[
\widehat{T_\kappa f}(\xi) = \frac{\log(2 + |\xi| / \kappa)}{\sqrt{\kappa^2 + \xi^2}} \hat{f}(\xi) .
\]

\begin{theorem} \label{hilbert-schmidt-norm}
If $q \in H^s(\bbR)$ or $q \in H^s_0(\bbR / \bbZ)$ for $-\frac{1}{2} < s < 0 $, then for $\kappa \geq 1$
\[
\| A(\kappa; q) \|_{\frak{I}_2}^2 \sim \langle q, T_\kappa q \rangle \lesssim_s \kappa^{-1-2s} \| q \|_{H^s}^2 .
\]
\end{theorem}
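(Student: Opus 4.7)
The plan is to compute $\|A(\kappa;q)\|_{\frak{I}_2}^2$ exactly from the Fourier kernel of $A(\kappa;q)$, identify the resulting quadratic form with $\langle q, T_\kappa q\rangle$, and then bound the symbol of $T_\kappa$ pointwise against the Sobolev weight $(1+|\xi|^2)^s$. Since $\sqrt{R_\kappa}$ acts on the Fourier side by multiplication by $\mathbbm{1}_{\xi>0}(\kappa+\xi)^{-1/2}$ while $q$ acts by convolution with $\hat q$, the Fourier kernel of $A(\kappa;q)$ is
\[
K_A(\xi,\eta) = c\, \mathbbm{1}_{\xi>0}\mathbbm{1}_{\eta>0} \cdot \frac{\hat q(\xi-\eta)}{\sqrt{(\kappa+\xi)(\kappa+\eta)}}
\]
up to an absolute constant $c$, and $\|A\|_{\frak{I}_2}^2$ is obtained by integrating (line) or summing (circle) $|K_A|^2$ over both variables.

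The central algebraic step is to substitute $\zeta=\xi-\eta$, apply Fubini, and evaluate the remaining integral or sum over $\eta$ by partial fractions. On the line,
\[
\int_{\max(0,-\zeta)}^\infty \frac{d\eta}{(\kappa+\eta)(\kappa+\zeta+\eta)} = \frac{\log(1+|\zeta|/\kappa)}{|\zeta|};
\]
on the circle, the analogous identity telescopes the series into the partial harmonic sum $\frac{1}{|\zeta|}\sum_{j=1}^{|\zeta|/(2\pi)} (\kappa+2\pi j)^{-1}$, with the diagonal $\zeta=0$ term vanishing by the mean-zero assumption. Comparing each of these to $\log(2+|\zeta|/\kappa)/\sqrt{\kappa^2+\zeta^2}$ in the regimes $|\zeta|\lesssim\kappa$ (each side $\sim 1/\kappa$) and $|\zeta|\gg\kappa$ (each side $\sim \log(|\zeta|/\kappa)/|\zeta|$, using on the circle the estimate $\sum_{j=1}^N (\kappa+2\pi j)^{-1} \sim \log(2+2\pi N/\kappa)$) yields $\|A(\kappa;q)\|_{\frak{I}_2}^2 \sim \langle q, T_\kappa q\rangle$.

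The remaining Sobolev bound reduces to the pointwise multiplier estimate
\[
\frac{\log(2+|\xi|/\kappa)}{\sqrt{\kappa^2+\xi^2}} \lesssim_s \kappa^{-1-2s}(1+|\xi|^2)^s, \qquad \kappa \geq 1.
\]
Writing $u=|\xi|/\kappa$ reduces this to a uniform bound on $\kappa^{2s}\log(2+u)(1+\kappa^2 u^2)^{-s}(1+u^2)^{-1/2}$. When $|\xi|\leq\kappa$, one uses $\kappa^{2s}\leq 1$ (since $s<0$, $\kappa\geq 1$) together with the boundedness of the remaining factors; when $|\xi|\gg\kappa$, the combined power $u^{-1-2s}$ decays (since $s>-1/2$) and absorbs the logarithm.

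The main obstacle is the multiplier identification in the second step: the partial-fraction and telescoping computations must be carried out carefully enough that the resulting symbols are uniformly comparable to that of $T_\kappa$ across all scales of $|\zeta|/\kappa$, simultaneously on the line and on the circle. Once this reduction is in hand, the pointwise Sobolev estimate is routine and makes essential use of both endpoint restrictions $s<0$ and $s>-1/2$.
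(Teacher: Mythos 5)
Your proposal is correct and follows essentially the same route as the paper: an exact computation of the Hilbert--Schmidt kernel, a change of variables and partial-fraction evaluation of the inner integral (resp.\ telescoping sum on the circle) producing the symbol $\log(1+|\zeta|/\kappa)/|\zeta|$, comparison of that symbol with the symbol of $T_\kappa$, and the same pointwise multiplier bound using $s>-\tfrac12$ to absorb the logarithm and $s<0$, $\kappa\ge 1$ for the final inequality. The only quibble is in the regime $|\xi|\le\kappa$: the factor $(1+\kappa^2u^2)^{-s}$ is not uniformly bounded in $\kappa$ on its own, so you must pair it with the discarded $\kappa^{2s}$ via $(1+\kappa^2u^2)^{-s}\le(2\kappa^2)^{-s}\lesssim\kappa^{-2s}$ --- a cosmetic fix that does not affect the argument.
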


\begin{proof}
We first consider the case of the line. We compute
\begin{align*}
\| A(\kappa; q)\|_{\frak{I}_2}^2 
	&= \int_{\xi \geq 0} \int_{\eta \geq 0} (\kappa + \xi)^{-1} (\kappa + \eta)^{-1} |\hat{q}(\xi - \eta)|^2 d \eta d \xi \\
	&= \int_{-\infty}^\infty \int_{\eta \geq \max (0, - \xi)} (\kappa + \xi + \eta)^{-1} (\kappa + \eta)^{-1} |\hat{q}(\xi)|^2 d \eta d \xi \\
	&= \int_0^\infty \frac{1}{\xi} \log \left( 1 + \frac{\xi}{\kappa} \right) |\hat{q}(\xi)|^2 d \xi - \int_{-\infty}^0 \frac{1}{\xi} \log \left( 1 - \frac{\xi}{\kappa} \right) |\hat{q}(\xi)|^2 d \xi \\
	&= \int_{- \infty}^\infty \frac{\log(1 + \frac{|\xi|}{\kappa})}{|\xi|} |\hat{q}(\xi)|^2 d \xi \\
	&\sim \int_{-\infty}^\infty \frac{\log(2 + \frac{|\xi|}{\kappa})}{\sqrt{\kappa^2 + \xi^2}} |\hat{q}(\xi)|^2 d \xi
\end{align*}
where the implicit constant in the last line is absolute. This proves the first inequality. The second inequality follows from the fact that
\[
\log(2 + |\xi| / \kappa) ( \kappa^2 + \xi^2)^{-1/2} \lesssim_s \kappa^{-1} \Big(1 + \Big(\frac{\xi}{\kappa}\Big)^2\Big)^{s} \leq \kappa^{-1-2s} (1 + \xi^2)^{s}
\]
for any $- \frac{1}{2} < s < 0$, $\kappa \geq 1$.

In the case $q \in H^s_0(\bbR / \bbZ)$, a similar computation to the above may be repeated, although the analogue of the third equality holds only within the bounds of multiplicative constants, rather than exactly.
\end{proof}

\begin{theorem} \label{constant-perturbation-determinant}
Let $q$ be a $C^0_t H^3_x \cap C^1_t H^1_x$ solution to \eqref{BO} on the line or the circle, having mean 0 if on the circle. For any $t \in \bbR$ and $s > - \frac{1}{2}$, there exists a constant $C = C(s)$ such that for all $\kappa \geq 1 + C \|q(t)\|_{H^s}^\frac{2}{1+2s}$,
\[
\frac{d}{dt} \alpha(\kappa; q(t)) = 0.
\]
\end{theorem}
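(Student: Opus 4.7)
The plan is to differentiate the series defining $\alpha(\kappa;q(t))$ term by term and show that the resulting series vanishes by using the Lax identity \eqref{lax-pair}.

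First, the hypothesis on $\kappa$ combined with Theorem~\ref{hilbert-schmidt-norm} ensures $\|A(\kappa;q(t))\|_{\frak{I}_2} < \tfrac{1}{3}$ for an appropriate choice of $C(s)$: indeed $\|A(\kappa;q(t))\|_{\frak{I}_2}^2 \lesssim_s \kappa^{-1-2s}\|q(t)\|_{H^s}^2$, and $\tfrac{2}{1+2s} > 0$ because $s > -\tfrac12$. To invoke Lemma~\ref{geometric-convergence} I also need $t \mapsto A(\kappa;q(t))$ to be $C^1$ in $\frak{I}_2$; this follows from the linearity of $A$ in $q$ and from Theorem~\ref{hilbert-schmidt-norm} applied to both $q(t)-q(t_0)$ and $\dot q(t)$, with the regularity $q \in C^0_t H^3_x \cap C^1_t H^1_x$ giving more than enough room. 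Lemma~\ref{geometric-convergence} then justifies
\[
\frac{d}{dt}\alpha(\kappa;q(t)) = \sum_{\ell=2}^\infty \tr\bigl\{A(\kappa;q(t))^{\ell-1}\,A(\kappa;\dot q(t))\bigr\}.
\]

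Second, I would pass from $A$ to the similar operator $M := C_+ q C_+ R_\kappa$, conjugate to $A$ via $\sqrt{R_\kappa}$, so that $\tr\{A^{\ell-1}\dot A\} = \tr\{M^{\ell-1}\dot M\}$ by cyclicity. The Lax equation $\dot L_+ = [L_+, P_+]$ combined with $\dot L_+ = -C_+\dot q C_+$ gives $C_+\dot q C_+ = [P_+, L_+]$, hence $\dot M = [P_+, L_+]R_\kappa$. Because $\|M\|_{\frak{I}_2} < \tfrac{1}{3}$, the resummed expression
\[
\sum_{\ell=2}^\infty \tr\{M^{\ell-1}\dot M\} = \tr\bigl\{M(I-M)^{-1}\,[P_+, L_+]R_\kappa\bigr\}
\]
is the trace of a genuinely trace-class operator: $M$ is Hilbert--Schmidt, $(I-M)^{-1}$ is bounded, and $\dot M$ is Hilbert--Schmidt by applying Theorem~\ref{hilbert-schmidt-norm} to $\dot q \in C^0_t H^1_x$.

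Third, I would exploit the identities $(I-M)^{-1} = R_\kappa^{-1}(\kappa+L_+)^{-1}$ (from $I - M = (\kappa+L_+)R_\kappa$) and $C_+qC_+ = R_\kappa^{-1} - (\kappa + L_+)$ to reduce the trace above to $\tr\{(\kappa+L_+)^{-1}[P_+, L_+]\}$ (vanishing by cyclicity, since $L_+$ commutes with $(\kappa+L_+)^{-1}$ and hence $\tr\{(\kappa+L_+)^{-1}P_+L_+\} = \tr\{(\kappa+L_+)^{-1}L_+P_+\}$) plus a ``boundary" contribution which reduces on the Fourier side to a scalar multiple of $\hat{\dot q}(0)$; the latter vanishes because $\dot q = -Hq'' + 2qq' = \partial_x(-Hq' + q^2)$ is a perfect derivative.

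The principal obstacle is in justifying this third step rigorously. The individual operators $(\kappa+L_+)^{-1}P_+L_+$, $C_+\dot q C_+ R_\kappa$, and so on are not trace-class in isolation, so the cyclic identities must be applied only to genuinely trace-class products. I would manage this by performing the algebra at the level of each fixed summand $\tr\{M^{\ell-1}\dot M\}$, where the factor $M^{\ell-1} \in \frak{I}_2$ (for $\ell \geq 2$) legitimately enables the cyclic manipulations, and only then reassembling the sum. The consistency check that the boundary-type term collapses to $\hat{\dot q}(0)$ times a divergent-looking but identically-zero Fourier integral is the only place where the mean-zero property of $\dot q$ is used.
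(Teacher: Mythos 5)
Your first step---choosing $C$ so that Theorem~\ref{hilbert-schmidt-norm} gives $\|A(\kappa;q(t))\|_{\frak{I}_2}<\tfrac13$ and invoking Lemma~\ref{geometric-convergence} to differentiate term by term---matches the paper exactly. The gap is in everything after that. Your reduction of $\sum_\ell \tr\{M^{\ell-1}\dot M\}$ to $\tr\{(\kappa+L_+)^{-1}[P_+,L_+]\}$ plus a ``boundary'' term proportional to $\hat{\dot q}(0)$ requires writing $M(I-M)^{-1}\dot M = R_\kappa^{-1}(\kappa+L_+)^{-1}\dot M - \dot M$ and splitting the trace across the two pieces. Neither piece is trace class: $\dot M = C_+\dot q C_+ R_\kappa$ is only Hilbert--Schmidt (its trace would be the divergent integral $\hat{\dot q}(0)\int_0^\infty(\kappa+\xi)^{-1}\,d\xi$, a $0\cdot\infty$ expression that the mean-zero observation does not resolve), and this is precisely why the paper discards the $\ell=1$ term when defining $\alpha$ in the first place. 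Likewise $\tr\{(\kappa+L_+)^{-1}[P_+,L_+]\}=0$ ``by cyclicity'' presupposes that $(\kappa+L_+)^{-1}P_+L_+$ and $(\kappa+L_+)^{-1}L_+P_+$ can be cycled separately, but these involve a net derivative gain that $(\kappa+L_+)^{-1}$ cannot absorb and are not trace class either. In short, your third step is the formal Lax-pair heuristic from the paper's introduction, and the whole content of the theorem is making that heuristic rigorous.

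You do flag this obstacle and propose to retreat to the individual summands $\tr\{M^{\ell-1}\dot M\}$, where $M^{\ell-1}\in\frak{I}_2$ legitimizes cycling. But at that level the resummation $(I-M)^{-1}$ is unavailable, so the clean algebraic collapse you describe never happens; you would have to exhibit the cancellation directly, and that is where the real work lies. The paper does this by splitting $\dot q=-Hq''+2qq'$, rearranging the two resulting series into a telescoping sum, proving the head term $\tr\{A(\kappa;q)A(\kappa;Hq'')\}=0$ by an explicit oddness argument on the Fourier side, and proving the telescoping identity $2\tr\{A^{\ell-1}A(\kappa;qq')\}=\tr\{A^{\ell-1}A(\kappa;Hq'')A(\kappa;q)\}$ via a Leibniz rule for the unbounded operator $R_\kappa^{-1}$, the decomposition $Hq''=\tfrac1i q_+''-\tfrac1i q_-''$, and the Cauchy-projection identities $C_+f_+C_+=f_+C_+$, $C_+f_-C_+=C_+f_-$; the case $\ell=2$ needs a separate argument carried out at the level of the Fourier integrals because only one spare copy of $A(\kappa;q)$ is available. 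None of this machinery appears in your proposal, so as written the proof does not go through.
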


\begin{proof}
We choose $C$ large enough that Theorem~\ref{hilbert-schmidt-norm} ensures that 
\[
\| A(\kappa ; q(t))\|_{\frak{I}_2} < \frac{1}{3}
\] 
whenever $\kappa \geq 1 + C \|q(t)\|_{H^s}^\frac{2}{1+2s}$. We then apply Lemma~\ref{geometric-convergence} to conclude that $\alpha(\kappa; q)$ converges on a neighborhood of $t$ and
\begin{align*}
\frac{d}{dt} \alpha(k;q(t)) 
	&= \sum_{\ell = 2}^\infty \tr \Big\{ A(\kappa,q)^{\ell - 1} A(\kappa;q_t) \Big\} \\
	&= \sum_{\ell = 2}^\infty \tr \Big \{ A(\kappa;q)^{\ell - 1} A(\kappa; -Hq'' + 2qq') \Big\} .
\end{align*}
By Theorem \ref{hilbert-schmidt-norm}, $A(\kappa;q)$ is a Hilbert-Schmidt operator, as is $A(\kappa; -H q'' + 2 q q')$ if $q \in H^3$, so we may cycle a copy of $A(\kappa;q)$ in the trace to obtain
\[
\frac{d}{dt} \alpha(\kappa;q(t)) = 
	- \sum_{\ell = 2}^\infty \tr \Big\{ A(\kappa;q)^{\ell - 2}A(\kappa;Hq'') A(\kappa;q) \Big\}
	+ \sum_{\ell = 2}^\infty  \tr \Big\{ A(\kappa;q)^{\ell - 1} A(\kappa;2qq') \Big\} ,
\]
which we rearrange slightly to give a telescoping series:
\begin{multline*}
\frac{d}{dt} \alpha(\kappa ; q(t)) = - \tr \Big\{ A(\kappa;q) A(\kappa;Hq'') \Big\} + \mbox{}
	 \\ + \sum_{\ell = 2}^\infty \left[ 2 \tr \Big\{ A(\kappa;q)^{\ell - 1} A(\kappa;qq') \Big\} -  \tr \Big\{ A(\kappa;q)^{\ell - 1} A(\kappa;Hq'') A(\kappa;q) \Big\} \right] .
\end{multline*}
Evidently it suffices to show that 
\begin{equation} \label{head}
\tr \Big\{ A(\kappa;q) A(\kappa;Hq'') \Big\} = 0
\end{equation} 
and
\begin{equation} \label{telescope}
2 \tr \Big\{ A(\kappa;q)^{\ell - 1} A(\kappa;qq') \Big\} =  \tr \Big\{ A(\kappa;q)^{\ell - 1} A(\kappa;Hq'') A(\kappa;q) \Big\} 
\end{equation}
for all $\ell \geq 2$.

To see \eqref{head}, we compute the trace directly on the line:
\begin{align*}
 \tr \{ A(\kappa;q) A & (\kappa;Hq'') \} \\
 	&= - \int_{\xi \geq 0} \int_{\eta \geq 0} (k + \xi)^{-1} \hat{q}(\xi - \eta) (k + \eta)^{-1} \hat{H}(\eta - \xi) (\eta - \xi)^2 \hat{q}(\eta - \xi) d \eta d \xi \\
 	&= - i \int_{\xi \geq 0} \int_{\eta \geq 0} \frac{\sgn(\xi - \eta) (\xi - \eta)^2}{(k+\xi)(k+\eta)} |\hat{q}(\xi - \eta)|^2 d \eta d \xi.
\end{align*}
This integral converges absolutely when $q \in H^2$. The integrand is odd with respect to $\xi = \eta$, so the integral evaluates to 0. The computation on the circle is similar.

To reduce the number of derivatives on $q$ in the right hand side of \eqref{telescope}, we require a Leibniz rule for the derivative operator $R_\kappa^{-1}$. If $f \in H^2$, we write
\[
C_+ f' C_+ = i C_+ [ C_+ (\kappa - i \partial_x) C_+ , f] C_+ = i C_+ [ R_\kappa^{-1}, f ] C_+
\]
and so, commuting $C_+$ and $R_\kappa$ as needed,
\begin{align*}
A(\kappa;q) A(\kappa;f') A(\kappa;q)
	&= i \sqrt{R_\kappa} C_+ q C_+ R_\kappa C_+ [R_\kappa^{-1} , f] C_+ R_\kappa C_+ q \sqrt{R_\kappa} \\
	&= i \sqrt{R_\kappa} C_+ q C_+ \big( f R_\kappa - R_\kappa f \big) C_+ q C_+ \sqrt{R_\kappa} \\
	&= i \sqrt{R_\kappa} C_+ q C_+ f C_+ \sqrt{R_\kappa} A(\kappa;q) - i A(\kappa;q) \sqrt{R_\kappa} C_+ f C_+ q C_+ \sqrt{R_\kappa}.
\end{align*}
Because $R_\kappa^{-1}$ is an unbounded operator, the first equality above holds only on the domain of $R_\kappa^{-1}$, which is a dense subset of $H^+$. However, $A(\kappa;q) \in \frak{I}_2$ and $\sqrt{R_\kappa} C_+ f C_+ g \sqrt{R_\kappa} \in \frak{I}_2$ when $f,g \in H^2$. This suffices to conclude
\begin{equation} \label{commutator-identity}
A(\kappa;q) A(\kappa;f') A(\kappa;q) = i \sqrt{R_\kappa} C_+ q C_+ f C_+ \sqrt{R_\kappa} A(\kappa;q) - i A(\kappa;q) \sqrt{R_\kappa} C_+ f C_+ q C_+ \sqrt{R_\kappa}
\end{equation}
with equality as operators on $H^+$.

Now we show \eqref{telescope}. We write 
\[H q'' = \tfrac{1}{i} q_+'' - \tfrac{1}{i} q_-'' = ( \tfrac{1}{i} q_+' - \tfrac{1}{i} q_-')' ,
\]
where $\varphi_{\pm}$ denotes the projection of $\varphi$ onto $H^{\pm}$. Letting $f = \tfrac{1}{i} q_+' - \tfrac{1}{i} q_-$ in \eqref{commutator-identity} , we find
\begin{align*}
\tr \{ &A(\kappa;q)^{\ell - 1} A(\kappa; H q'') A(\kappa;q) \} \\
	&= \tr \Big\{ A(\kappa;q)^{\ell - 2} \sqrt{R_\kappa} C_+ q C_+ q_+' C_+ \sqrt{R_\kappa} A(\kappa;q) \Big\} - \tr \Big\{ A(\kappa;q)^{\ell - 1} \sqrt{R_\kappa} C_+ q_+' C_+ q C_+ \sqrt{R_\kappa} \Big\} \\
	&\qquad - \tr \Big\{ A(\kappa;q)^{\ell - 2} \sqrt{R_\kappa} C_+ q C_+ q_-' C_+ \sqrt{R_\kappa} A(\kappa;q) \Big\} + \tr \Big\{ A(\kappa;q)^{\ell - 1} \sqrt{R_\kappa} C_+ q_-' C_+ q C_+ \sqrt{R_\kappa} \Big\} \\
	&= \tr \Big\{ A(\kappa;q)^{\ell - 1} \sqrt{R_\kappa} C_+ q C_+ q_+' C_+ \sqrt{R_\kappa} \Big\} - \tr \Big\{ A(\kappa;q)^{\ell - 1} \sqrt{R_\kappa} C_+ q_+' C_+ q C_+ \sqrt{R_\kappa} \Big\} \\
	&\qquad - \tr \Big\{ A(\kappa;q)^{\ell - 1} \sqrt{R_\kappa} C_+ q C_+ q_-' C_+ \sqrt{R_\kappa} \Big\} + \tr \Big\{ A(\kappa;q)^{\ell - 1} \sqrt{R_\kappa} C_+ q_-' C_+ q C_+ \sqrt{R_\kappa} \Big\} \\
	&=: A - B - C + D .
\end{align*}
We pass to the penultimate line above by cycling a copy of $A(\kappa;q)$ in two of the trace terms. Adding and subtracting $A+D$ yields
\[
\tr \{ A(\kappa;q)^{\ell - 1} A(\kappa; H q'') A(\kappa;q) \} = 2 (A+D) - A - B - C - D .
\]
We exploit some identities of the Cauchy projections in order to simplify the above expressions. If $f \in L^2(\bbR)$ or $f \in L^2_0 (\bbR / \bbZ)$, then $C_+ f_+ C_+ = f_+ C_+$ and $C_+ f_- C_+ = C_+ f_-$. Thus
\[
A = \tr \Big\{ A(\kappa;q)^{\ell - 1} \sqrt{R_\kappa} C_+ q q_+' C_+ \sqrt{R_\kappa} \Big\} , \quad D = \tr \Big\{ A(\kappa;q)^{\ell - 1} \sqrt{R_\kappa} C_+ q_-' q  C_+\sqrt{R_\kappa} \Big\} .
\]
Applying the identity $f_+ + f_- = f$, we find
\[
A + D = \tr \Big\{ A(\kappa;q)^{\ell - 1} A(\kappa;qq') \Big\} .
\]
Thus to show \eqref{telescope} and complete the proof of the theorem, it suffices to show $A + B + C + D = 0$. By the same identity, we may simplify
\[
A+C = \tr \Big\{ A(\kappa;q)^{\ell - 1} \sqrt{R_\kappa} C_+ q C_+ q' C_+ \sqrt{R_\kappa} \Big\} 
\]
and
\[
B+D = \tr \Big\{ A(\kappa;q)^{\ell - 1} \sqrt{R_\kappa} C_+ q' C_+ q C_+ \sqrt{R_\kappa} \Big\} .
\]
When $\ell \geq 3$, we apply the Leibniz identity
\[
i C_+ [ R_\kappa^{-1}, q C_+ q ] C_+ = C_+ q' C_+ q C_+ + C_+ q C_+ q' C_+
\]
and cycle a copy of $A(\kappa;q)$ in the trace to find $A+B+C+D = \tr \{X\}$, where
\begin{align*}
X
	&= i A(\kappa;q)^{\ell - 2} \sqrt{R_\kappa} C_+ [ R_\kappa^{-1}, q C_+ q ] C_+ \sqrt{R_\kappa} A(\kappa;q) \\
	&= i A(\kappa;q)^{\ell - 3} \sqrt{R_\kappa} C_+ q C_+ q C_+ q C_+ \sqrt{R_\kappa} A(\kappa;q) - i A(\kappa;q)^{\ell - 2} \sqrt{R_\kappa} C_+ q C_+ q C_+ q C_+ \sqrt{R_\kappa} .
\end{align*}
Because $\sqrt{R_\kappa} C_+ q C_+ q C_+ q C_+ \sqrt{R_\kappa} \in \mathfrak{I}_2$, we may substitute this into the trace and cycle a copy of $A(\kappa;q)$ to obtain
\[
A + B + C + D = \tr\{X\} = 0.
\]
In the case $\ell = 2$, we do not have two copies of $A(\kappa;q)$ to place around the commutator, so we cannot apply the Leibniz rule as an operator identity. Instead we apply the same idea at the level of the integrals:
\begin{align*}
(A + C) + (B + D) &= \int_{\xi \geq 0} \int_{\eta \geq 0} \int_{\nu \geq 0} \frac{i(\nu - \xi)}{(\kappa + \xi) (\kappa + \eta)} \hat{q}(\xi - \eta) \hat{q}(\eta - \nu) \hat{q}(\nu - \xi) d \nu d \eta d \xi \\
	& \qquad + \int_{\xi \geq 0} \int_{\eta \geq 0} \int_{\nu \geq 0} \frac{i(\eta - \nu)}{(\kappa + \xi) (\kappa + \eta)} \hat{q}(\xi - \eta) \hat{q}(\eta - \nu) \hat{q}(\nu - \xi) d \nu d \eta d \xi \\
	&= \int_{\xi \geq 0} \int_{\eta \geq 0} \int_{\nu \geq 0} \frac{i(\eta - \xi)}{(\kappa + \xi) (\kappa + \eta)} \hat{q}(\xi - \eta) \hat{q}(\eta - \nu) \hat{q}(\nu - \xi) d \nu d \eta d \xi \\
	&= i\int_{\xi \geq 0} \int_{\eta \geq 0} \int_{\nu \geq 0} \frac{1}{(\kappa + \xi)} \hat{q}(\xi - \eta) \hat{q}(\eta - \nu) \hat{q}(\nu - \xi) d \nu d \eta d \xi \\
	&\qquad - i \int_{\xi \geq 0} \int_{\eta \geq 0} \int_{\nu \geq 0} \frac{1}{(\kappa + \eta)} \hat{q}(\xi - \eta) \hat{q}(\eta - \nu) \hat{q}(\nu - \xi) d \nu d \eta d \xi .
\end{align*}
The above integrals converge by Cauchy-Schwarz. Cycling the variables $\xi \mapsto \nu \mapsto \eta \mapsto \xi$ in the second integral, we see that the two integrals in the last identity are equal. This completes the proof.
\end{proof}

Because $\alpha$ is comparable to its first term, as a corollary to this result we obtain uniform in time control of $\| A(\kappa; q(t)) \|_{\frak{I}_2}$.

\begin{corollary} \label{hs-control}
Let $s > -\frac{1}{2}$ and let $q$ be a $C^0_t H^3_x \cap C^1_t H^1_x$ solution to \eqref{BO} on the line or the circle, having mean 0 if on the circle. Then there exists a constant $C=C(s)$ such that for all $\kappa \geq 1 + C \| q(0)\|_{H^s}^\frac{2}{1+2s}$ ,
\[
\sup_{t \in \bbR} \| A(\kappa;q(t)) \|_{\frak{I}_2}^2 \leq 2 \| A(\kappa; q(0)) \|_{\frak{I}_2}^2 < \frac{1}{9}
\]
and therefore, by Theorem~\ref{hilbert-schmidt-norm},
\[
\langle q(t), T_{\kappa} q(t) \rangle \lesssim \langle q(0), T_{\kappa} q(0) \rangle.
\]
\end{corollary}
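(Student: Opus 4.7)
The plan is a standard bootstrap/continuity argument: combine the pointwise conservation of $\alpha$ from Theorem~\ref{constant-perturbation-determinant} with the comparability $\alpha \sim \|A\|_{\frak{I}_2}^2$ from Lemma~\ref{geometric-convergence} to propagate the smallness of $\|A(\kappa;q(0))\|_{\frak{I}_2}$ forward and backward in time. The initial input is Theorem~\ref{hilbert-schmidt-norm}: I would choose $C = C(s)$ large enough so that the hypothesis $\kappa \geq 1 + C\|q(0)\|_{H^s}^{2/(1+2s)}$ already forces $\|A(\kappa;q(0))\|_{\frak{I}_2}^2 < \tfrac{1}{18}$, and fix such a $\kappa$.

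Define $J = \{t \in \bbR : \|A(\kappa;q(t))\|_{\frak{I}_2} < 1/3\}$. Since $q \in C^0_t H^3_x \subset C^0_t H^s_x$ and $q \mapsto \|A(\kappa;q)\|_{\frak{I}_2}$ is continuous on $H^s$ by Theorem~\ref{hilbert-schmidt-norm}, the set $J$ is open, and it contains~$0$. Let $J_0$ be the connected component of $J$ containing~$0$. The conservation argument of Theorem~\ref{constant-perturbation-determinant} only uses $\|A(\kappa;q(t))\|_{\frak{I}_2} < 1/3$ at each $t$ (the $H^s$ hypothesis in that theorem enters solely through Theorem~\ref{hilbert-schmidt-norm} to secure this smallness), so on $J_0$ the function $t \mapsto \alpha(\kappa;q(t))$ is $C^1$ with vanishing derivative, hence constant. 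Applying Lemma~\ref{geometric-convergence} at both ends of the chain
$$
\tfrac{1}{3}\|A(\kappa;q(t))\|_{\frak{I}_2}^2 \leq \alpha(\kappa;q(t)) = \alpha(\kappa;q(0)) \leq \tfrac{2}{3}\|A(\kappa;q(0))\|_{\frak{I}_2}^2
$$
yields $\|A(\kappa;q(t))\|_{\frak{I}_2}^2 \leq 2\|A(\kappa;q(0))\|_{\frak{I}_2}^2 < 1/9$ for every $t \in J_0$.

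The remaining step is to upgrade $J_0$ to all of $\bbR$. Because $\|A(\kappa;q(t))\|_{\frak{I}_2}^2$ is bounded uniformly on $J_0$ by the fixed constant $2\|A(\kappa;q(0))\|_{\frak{I}_2}^2 < 1/9$, continuity of $t \mapsto \|A(\kappa;q(t))\|_{\frak{I}_2}$ gives the same bound at any finite endpoint of $J_0$, placing that endpoint inside the open set $J$ and contradicting maximality. Hence $J_0 = \bbR$, which is the first assertion. The inequality $\langle q(t), T_\kappa q(t)\rangle \lesssim \langle q(0), T_\kappa q(0)\rangle$ then follows immediately from the equivalence $\|A(\kappa;q)\|_{\frak{I}_2}^2 \sim \langle q, T_\kappa q\rangle$ in Theorem~\ref{hilbert-schmidt-norm}. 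The only delicate point in the whole argument is the bookkeeping needed to close the bootstrap with strict inequality to spare, which is why we arrange $<1/18$ at the initial time rather than the target $<1/9$.
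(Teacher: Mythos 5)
Your proposal is correct and follows essentially the same route as the paper: fix $C$ so that $\|A(\kappa;q(0))\|_{\frak{I}_2}^2 < \tfrac{1}{18}$, use the conservation of $\alpha$ together with the two-sided comparison $\tfrac13\|A\|_{\frak{I}_2}^2 \le \alpha \le \tfrac23\|A\|_{\frak{I}_2}^2$ from Lemma~\ref{geometric-convergence} to close a continuity argument, noting that the bound $<\tfrac19$ is an open condition. Your write-up merely makes explicit the connected-component bookkeeping that the paper leaves as ``the theorem follows by a continuity argument.''
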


\begin{proof}

We may choose $C$ sufficiently large that $\| A(\kappa; q(0))\|_{\frak{I}_2}^2 < \frac{1}{18}$. By Lemma~\ref{geometric-convergence} and Theorem~\ref{constant-perturbation-determinant}, there exists a neighborhood $I$ of 0 on which
\begin{equation} \label{trace-control}
\| A(\kappa; q(t)) \|_{\frak{I}_2}^2 \leq 3\alpha(\kappa ; q(t)) = 3 \alpha(\kappa; q(0)) \leq 2 \| A(\kappa ; q(0)) \|_{\frak{I}_2}^2 < \frac{1}{9} .
\end{equation}
Since $\|A(\kappa ; q(t))\|_{\frak{I}_2} < \frac{1}{3}$, Lemma 1 implies that \eqref{trace-control} is an open condition, and the theorem follows by a continuity argument.
\end{proof}

\section{Conservation of Norms}
Because of the logarithmic factor, $\langle q , T_\kappa q \rangle$ is not commensurate with any $H^s$ norm of $q$; it behaves like $\| q\|_{H^{-1/2}}^2$ at frequencies $\lesssim \kappa$ and like $\| \log (|\nabla|) \langle \nabla \rangle^{-1/2} q \|_{L^2}^2$ at frequencies $\gg \kappa$. This difficulty is avoided if we ``build'' $\|q\|_{H^s}$ for $-\frac{1}{2} < s < 0$ one frequency scale at a time, using the contribution of $\langle q , T_\kappa q \rangle$ at the frequency scale $\kappa$ where it behaves like a pure Sobolev norm.

This is naturally expressed in terms of the Besov norms
\[
\| f \|_{B^{s,2}_r} = \left( \| \hat{f}(\xi)| \|_{L^2(|\xi| \leq 1)}^r + \sum_{N > 1} N^{rs} \| \hat{f}(\xi) \|_{L^2(N \leq |\xi| < 2N)}^r \right)^{1/r}
\]
where the sum is taken over dyadic $N = 2,4,8,\ldots$ and with the usual interpretation in the case $r = \infty$. The following lemma (the analogue of Lemma 3.2 in \cite{killip-visan-zhang}) relates this norm to (the leading term of) $\alpha(k ; q)$.

\begin{lemma} \label{besov-building}
Fix $-\frac{1}{2} < s < 0$, $1 \leq r \leq \infty$, $\kappa_0 \geq 1$. For any $H^2$  function $f$,
\begin{equation} \label{build-by-scales-right}
\| f \|_{B^{s,2}_r}^r 
	\lesssim \sum_{N \in 2^{\bbN}} N^{rs} \big( \kappa_0 N \langle f, T_{\kappa_0N} f \rangle \big)^{r/2}
\end{equation}
and
\begin{equation} \label{build-by-scales-left}
\sum_{N \in 2^\bbN} N^{rs} \big( \kappa_0 N \langle f, T_{\kappa_0 N} f \rangle \big)^{r/2}
	\lesssim_s \kappa_0^{-rs} \| f \|_{B^{s,2}_r}^r .
\end{equation}
\end{lemma}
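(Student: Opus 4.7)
The plan is to decompose both sides into Littlewood--Paley pieces, compare them scale-by-scale, and combine using Schur's test. Write $a_M := \|\hat f\|_{L^2(M \le |\xi| < 2M)}^2$ for dyadic $M$, so that $\|f\|_{B^{s,2}_r}^r$ is essentially $\sum_M M^{rs} a_M^{r/2}$. The key input is the size of the Fourier multiplier of $\kappa_0 N \cdot T_{\kappa_0 N}$, namely
\[
m(\xi) \;:=\; \frac{\kappa_0 N \log(2+|\xi|/(\kappa_0 N))}{\sqrt{(\kappa_0 N)^2+\xi^2}},
\]
which, for $\kappa_0 \ge 1$, is bounded above and below by absolute constants on $|\xi| \in [N,2N]$, while on a general dyadic shell $|\xi| \sim M$ its size is captured by
\[
W(\kappa_0 N, M) \;\sim\; \begin{cases} 1, & M \le \kappa_0 N, \\ \dfrac{\kappa_0 N\,\log(M/(\kappa_0 N))}{M}, & M > \kappa_0 N. \end{cases}
\]

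For the lower bound \eqref{build-by-scales-right}, restrict the integral defining $\langle f,T_{\kappa_0 N}f\rangle$ to the single shell $|\xi|\in[N,2N]$ (and, at the smallest $N$, also to $|\xi|\le 1$ to absorb the low-frequency piece of the Besov norm), on which $m(\xi) \gtrsim 1$. This yields $\kappa_0 N\langle f,T_{\kappa_0 N}f\rangle \gtrsim a_N$ (and separately $\gtrsim \|\hat f\|_{L^2(|\xi|\le 1)}^2$ at the bottom scale, combining via $(x+y)^{r/2}\gtrsim x^{r/2}+y^{r/2}$). Raising to the power $r/2$ and summing against $N^{rs}$ then recovers $\|f\|_{B^{s,2}_r}^r$.

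For the upper bound \eqref{build-by-scales-left}, estimate $\kappa_0 N\langle f,T_{\kappa_0 N}f\rangle \lesssim \sum_M W(\kappa_0 N,M)\,a_M$ and use the subadditivity $(\sum x_M)^{1/2} \le \sum x_M^{1/2}$ (which is what allows the argument to cover $r<2$) to obtain
\[
N^s\bigl(\kappa_0 N\langle f,T_{\kappa_0 N}f\rangle\bigr)^{1/2} \;\lesssim\; \sum_M \widetilde K(N,M)\,\bigl(M^s a_M^{1/2}\bigr), \qquad \widetilde K(N,M) \;:=\; N^s M^{-s} W(\kappa_0 N,M)^{1/2}.
\]
A direct dyadic calculation, splitting into the regimes $M \le \kappa_0 N$ and $M > \kappa_0 N$, then gives $\sup_M \sum_N \widetilde K(N,M) \lesssim_s \kappa_0^{-s}$ and $\sup_N \sum_M \widetilde K(N,M) \lesssim_s \kappa_0^{-s}$. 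By the Schur--Young test, the operator with kernel $\widetilde K$ is bounded on $\ell^r$ with norm $\lesssim_s \kappa_0^{-s}$ for every $1 \le r \le \infty$; raising the resulting inequality to the $r$-th power yields \eqref{build-by-scales-left}.

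The main technical point sits inside those Schur sums. In the off-diagonal regime $M > \kappa_0 N$, one encounters series of the form $\sum_{N < M/\kappa_0} N^{s+1/2}\sqrt{\log(M/(\kappa_0 N))}$, which is comparable to its largest term only because $s + \tfrac{1}{2} > 0$. This is precisely the scaling-critical threshold of the problem, so the restriction $s > -\tfrac{1}{2}$ in the lemma (and in the main theorem) already makes itself felt here, and I expect this convergence check to be the only step requiring real care.
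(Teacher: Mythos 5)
Your proposal is correct and follows essentially the same route as the paper: the lower bound comes from the pointwise lower bound on the multiplier of $\kappa_0 N\, T_{\kappa_0 N}$ at frequencies $|\xi|\lesssim N$, and the upper bound comes from a dyadic decomposition, the subadditivity $(\sum x_M)^{1/2}\le\sum x_M^{1/2}$, and Schur's test on the resulting $\ell^r$ matrix with row and column sums $\lesssim_s \kappa_0^{-s}$. You also correctly identify the column-sum series $\sum_{N<M/\kappa_0}N^{s+1/2}\sqrt{\log(M/(\kappa_0 N))}$ as the step where $s>-\tfrac12$ is used, which matches the paper's remark.
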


\begin{proof}
The inequality \eqref{build-by-scales-right} follows easily from the estimate
\[
\| \hat{f}(\xi) \|_{L^2(|\xi| \leq N)}^2 \leq \frac{2}{\log 2} \int \frac{\kappa_0 N \log ( 2 + \frac{|\xi|}{\kappa_0 N})}{\sqrt{\kappa_0^2 N^2 + \xi^2}} |\hat{f}(\xi)|^2 d \xi .
\]

To control the other direction, we decompose
\begin{align*}
\int & \frac{\kappa_0 N \log ( 2 + \frac{|\xi|}{\kappa_0 N})}{\sqrt{\kappa_0^2 N^2 + \xi^2}} |\hat{f}(\xi)|^2 d \xi \\
	&\leq \log(3) \| \hat{f}(\xi)\|_{L^2(|\xi| \leq 1)}^2 + \sum_{M \in 2^\bbN} \frac{\kappa_0 N \log(2 + \frac{2M}{\kappa_0 N})}{\sqrt{\kappa_0 N^2 + M^2}} \| \hat{f}(\xi)\|_{L^2(M < |\xi| \leq 2M)}^2 \\
	&\leq \left( \sqrt{\log(3)} \| \hat{f}(\xi)\|_{L^2(|\xi| \leq 1)} + \sum_{M \in 2^\bbN} \left( \frac{ \kappa_0 N \log(2 + \frac{2M}{\kappa_0 N})}{\sqrt{\kappa_0^2 N^2 + M^2}} \right)^{1/2} \| \hat{f}(\xi)\|_{L^2(M < |\xi| \leq 2M)} \right)^2.
\end{align*}
This shows that the left-hand side of \eqref{build-by-scales-left} is bounded by 
\[
\left\| \sqrt{\log(3)} N^s \| \hat{f}(\xi) \|_{L^2(|\xi| \leq 1)} + \sum_{M \in 2^\bbN} \left( \frac{ \kappa_0 N^{1 + 2s} M^{-2s} \log(2 + \frac{2M}{\kappa_0 N})}{\sqrt{\kappa_0^2 N^2 + M^2}} \right)^{1/2} M^s \| \hat{f}(\xi)\|_{L^2(M < |\xi| \leq 2M)} \right\|_{\ell^r(N \in 2^\bbN)}^r
\]
which reduces our task to estimating the operator norm of a certain $\ell^r \to \ell^r$ matrix. To do this, we apply Schur's test. The row sums of this operator are bounded by
\[
\sqrt{\log(3)} N^s + \sum_{M \in 2^\bbN} \left( \frac{ \kappa_0 N^{1 + 2s} M^{-2s} \log(2 + \frac{2M}{\kappa_0 N})}{\sqrt{\kappa_0^2 N^2 + M^2}} \right)^{1/2} \lesssim_s 1 + \kappa_0^{-s}
\]
uniformly in $N$, while the column sums are bounded by 
\[
\sum_{N \in 2^\bbN} \sqrt{\log(3)} N^s \lesssim_s 1, \qquad \sum_{N \in 2^\bbN}  \left( \frac{ \kappa_0 N^{1 + 2s} M^{-2s} \log(2 + \frac{2M}{\kappa_0 N})}{\sqrt{\kappa_0^2 N^2 + M^2}} \right)^{1/2} \lesssim_s \kappa_0^{-s}
\]
uniformly in $M$. Note that to make these estimates we require the condition $- \frac{1}{2} < s < 0$. This proves \eqref{build-by-scales-left}.
\end{proof}

Our main result now follows easily from the foregoing lemma and Corollary \ref{hs-control}.

\begin{theorem} \label{main-theorem}
Let $q$ be a $C^0_tH^3_x \cap C^1_tH^1_x$ solution to \eqref{BO} on the line or the circle and let $- \frac{1}{2} < s < 0$, $1 \leq r \leq \infty$. Then
\[
(1 + \| q(0)\|^{\frac{2}{1+2s}}_{B^{s,2}_r})^{s} \sup_{t \in \bbR} \| q(t) \|_{B^{s,2}_r} \lesssim_{s,r} \| q(0) \|_{B^{s,2}_r}
	\lesssim_{s,r} (1 + \|q(0)\|^{\frac{2}{(1+2s)^2}}_{B^{s,2}_r})^{-s} \inf_{t \in \bbR} \| q(t)\|_{B^{s,2}_r} .
\]
\end{theorem}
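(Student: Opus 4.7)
The plan is to assemble $\|q(t)\|_{B^{s,2}_r}$ one dyadic frequency scale at a time via Lemma~\ref{besov-building} and to transport each scale from time $t$ back to time $0$ using the per-scale conservation provided by Corollary~\ref{hs-control}. The first task is to fix a threshold $\kappa_0$ above which that conservation applies. Since Corollary~\ref{hs-control} is calibrated to $\|q(0)\|_{H^s}$ while the theorem is phrased in $\|q(0)\|_{B^{s,2}_r}$, I would rederive the threshold by reading off the single scale $N=1$ of \eqref{build-by-scales-left}, which yields
\[
\langle q(0), T_{\kappa_0} q(0)\rangle \;\lesssim_{s,r}\; \kappa_0^{-1-2s}\,\|q(0)\|_{B^{s,2}_r}^{2}.
\]
Combined with Theorem~\ref{hilbert-schmidt-norm}, this lets me take $\kappa_0 := 1 + C(s,r)\,\|q(0)\|_{B^{s,2}_r}^{2/(1+2s)}$ with $C(s,r)$ large enough to force $\|A(\kappa_0;q(0))\|_{\mathfrak{I}_2}^{2} < \tfrac{1}{18}$. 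The continuity/bootstrap argument in the proof of Corollary~\ref{hs-control} then runs verbatim and delivers $\langle q(t),T_\kappa q(t)\rangle \lesssim \langle q(0),T_\kappa q(0)\rangle$ uniformly in $t$ for every $\kappa \geq \kappa_0$; in particular for every $\kappa = \kappa_0 N$ with $N \in 2^{\mathbb{N}}$.

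Next I would chain the two halves of Lemma~\ref{besov-building}. Applying \eqref{build-by-scales-right} to $q(t)$ gives
\[
\|q(t)\|_{B^{s,2}_r}^{r}\;\lesssim\;\sum_{N \in 2^{\mathbb{N}}} N^{rs}\bigl(\kappa_0 N\,\langle q(t),T_{\kappa_0 N}q(t)\rangle\bigr)^{r/2}.
\]
Inserting the conservation bound at each scale $\kappa_0 N$ transports the right-hand side to the initial data, and then \eqref{build-by-scales-left} collapses the sum to $\kappa_0^{-rs}\,\|q(0)\|_{B^{s,2}_r}^{r}$. Taking $r$-th roots, substituting the chosen value of $\kappa_0$, and supremizing over $t$ yields the first half of the theorem.

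For the reverse inequality, I would exploit the time-reversibility of \eqref{BO} in the class $C^0_tH^3_x\cap C^1_tH^1_x$: applying the first half with $q(t)$ playing the role of initial data and $0$ playing the role of endpoint gives, for every $t$,
\[
\|q(0)\|_{B^{s,2}_r} \;\lesssim_{s,r}\; \bigl(1+\|q(t)\|_{B^{s,2}_r}^{2/(1+2s)}\bigr)^{-s}\|q(t)\|_{B^{s,2}_r}.
\]
To convert the factor $\|q(t)\|^{2/(1+2s)}$ into $\|q(0)\|^{2/(1+2s)^{2}}$, I would feed the already-proven forward bound back in. When $\|q(0)\|_{B^{s,2}_r}\leq 1$ both sides are of order one and the claim is immediate; when $\|q(0)\|_{B^{s,2}_r}\geq 1$, the forward bound simplifies to $\|q(t)\|_{B^{s,2}_r}\lesssim \|q(0)\|_{B^{s,2}_r}^{1/(1+2s)}$, so $\|q(t)\|^{2/(1+2s)}\lesssim \|q(0)\|^{2/(1+2s)^{2}}$. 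Monotonicity of $x\mapsto(1+x)^{-s}$ in $x$ (recall $-s>0$) then upgrades the inequality, and taking the infimum over $t$ completes the proof.

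The main obstacle I anticipate is the exponent bookkeeping in this last step: the appearance of the quadratic $(1+2s)^{2}$ in the right-hand inequality is exactly the trace of composing the forward bound with itself, and it requires a clean separation into the regimes $\|q(0)\|_{B^{s,2}_r}\leq 1$ and $\|q(0)\|_{B^{s,2}_r}\geq 1$. The only other non-routine point, promoting the threshold of Corollary~\ref{hs-control} from $\|q(0)\|_{H^s}$ to $\|q(0)\|_{B^{s,2}_r}$ (which matters when $r>2$), is handled cleanly by the single-scale reading of Lemma~\ref{besov-building} indicated above.
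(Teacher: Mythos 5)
Your argument follows the paper's proof almost exactly: the same choice of threshold $\kappa_0 \sim 1 + \|q(0)\|_{B^{s,2}_r}^{2/(1+2s)}$, the same chaining of \eqref{build-by-scales-right}, per-scale conservation from Corollary~\ref{hs-control}, and \eqref{build-by-scales-left}, and the same self-composition of the forward bound to produce the exponent $2/(1+2s)^2$ in the reverse inequality. The only methodological divergence is how you calibrate $\kappa_0$ to the Besov norm: you read off the single scale $N=1$ of \eqref{build-by-scales-left} (implicitly using that $\kappa \mapsto \langle q, T_\kappa q\rangle$ is monotone, so smallness of $\|A(\kappa;q(0))\|_{\mathfrak{I}_2}$ at $\kappa_0$ propagates to every $\kappa_0 N$), whereas the paper uses the embedding $B^{s,2}_r \hookrightarrow H^{s-}$ and invokes Corollary~\ref{hs-control} at a slightly lower Sobolev index. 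Both are valid.

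There is, however, one genuine omission: on the circle, Corollary~\ref{hs-control} --- and all of the Lax-pair machinery behind it --- is only established for mean-zero solutions, since the decomposition $L^2_0(\bbR/\bbZ) = H^+ \oplus H^-$ and the conservation of $\alpha(\kappa;q)$ require $\hat q(0)=0$. The theorem carries no mean-zero hypothesis, so your proof as written covers the line and only the mean-zero case of the circle. The paper closes this gap with the Galilean symmetry \eqref{galilei}: choosing $\mu$ to cancel the (conserved) mean of $q$ yields a mean-zero solution $\tilde q(t,x) = q(t,x+2\mu t)+\mu$ to which the argument applies, and the equivalence $\|\tilde q(t)\|_{B^{s,2}_r}^2 \sim \|q(t)\|_{B^{s,2}_r}^2 + \mu^2$ transfers the two-sided estimates back to $q$. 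You need to add this reduction (or restrict the circle statement to mean-zero data).
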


\begin{proof}
On the circle, we first assume that $q$ has mean 0. By H\"older's inequality, we have an embedding $B^{s_1,2}_r \hookrightarrow B^{s_2,2}_2 = H^{s_2}$ for any $s_2 < s_1$. Let 
\[
\kappa_0 = 1 + C \| q(0) \|_{B^{s,2}_r}^\frac{2}{1+2s} \gtrsim_s 1 + C \| q(0)\|_{H^{s-}}^\frac{2}{1+2s}
\] 
for a sufficiently large constant $C$, so that we may apply Corollary \ref{hs-control}. Then, for any time $t$, Lemma~\ref{besov-building} implies
\begin{align*}
\| q(t) \|_{B^{s,2}_r}^r 
	&\lesssim \sum_{N \in 2^\bbN} N^{rs} \big( \kappa_0 N \langle q(t), T_{\kappa_0 N} q(t) \rangle \big)^{r/2} \\
	&\lesssim \sum_{N \in 2^\bbN} N^{rs} \big( \kappa_0 N \langle q(0), T_{\kappa_0 N} q(0) \rangle \big)^{r/2} \\
	&\lesssim_s \kappa_0^{-rs} \| q(0) \|_{B^{s,2}_r}^r \\
	&= (1 + \| q(0)\|_{B^{s,2}_r}^{\frac{2}{1+2s}} )^{-rs} \| q(0) \|_{B^{s,2}_r}^r .
\end{align*}
This proves the first inequality. By time translation symmetry, we then also obtain
\[
\| q(0) \|_{B_r^{s,2}} \lesssim_s (1 + \| q(t)\|_{B^{s,2}_r}^{\frac{2}{1+2s}} )^{-rs} \| q(t) \|_{B^{s,2}_r}
\]
and applying the first inequality to the quantity in parentheses produces the second inequality.

To remove the mean zero assumption on the circle, we employ Galilean invariance: if $q$ solves \eqref{BO}, then so does
\begin{equation} \label{galilei}
\tilde{q}(t,x) = q(t,x + 2 \mu t) + \mu .
\end{equation}
The estimate 
\[
\| \tilde{q}(t)\|_{B^{s,2}_r(\bbR / \bbZ)}^2 \sim \| q(t)\|_{B^{s,2}_r(\bbR/\bbZ)}^2 + \mu^2
\]
then implies the general theorem.
\end{proof}

\end{document}